\title[A generalization of Esteves--Homma's example]{A generalization of Esteves--Homma's example of tangentially degenerate curves} 
\author{Satoru Fukasawa}
\subjclass[2020]{14N05, 14H37}
\keywords{tangentially degenerate curves, Gauss map, positive characteristic, order sequences}
\address{Department of Mathematical Sciences, Faculty of Science, Yamagata University, Kojirakawa-machi 1-4-12, Yamagata 990-8560, Japan} 
\email{s.fukasawa@sci.kj.yamagata-u.ac.jp} 
\thanks{The author was partially supported by JSPS KAKENHI Grant Number JP19K03438.}
\newtheorem{theorem}{Theorem}
\newtheorem{definition}{Definition}
\newtheorem{proposition}{Proposition}
\newtheorem{question}{Question}
\theoremstyle{definition}
\newtheorem{remark}{Remark}
\begin{document}
\begin{abstract}
This paper presents a method of a construction of tangentially degenerate curves with a birational Gauss map, focusing on the non-classicality of automorphisms. 
This method describes a generalization of  Esteves--Homma's example of this kind.  
In addition, this paper presents a smooth projective curve with a birational Gauss map such that a general tangent line contains three or more points of the curve, which answers a question raised by Kaji in the affirmative. 
\end{abstract}

\maketitle 

\section{Introduction}

An irreducible projective curve $C \subset \mathbb{P}^N$ with $N \ge 3$ not contained in any plane over an algebraically closed field $k$ of characteristic $p \ge 0$ is said to be {\it tangentially degenerate} if $(T_PC\setminus \{P\}) \cap C \ne \emptyset$ for a general point $P \in C$, where $T_PC \subset \mathbb{P}^N$ is the projective tangent line at $P$.  
This terminology is due to Kaji \cite{kaji1}. 
In characteristic zero, the existence of tangentially degenerate curves was asked by Terracini in 1932 (\cite[p.143]{terracini}, see also \cite{ciliberto}). 
For the case where $p=0$ and the morphism $X \rightarrow \mathbb{P}^N$ induced by the normalization $\varphi: X \rightarrow C$ is unramified, the nonexistence of such curves was proved by Kaji \cite{kaji1} in 1986. 
Generalizations of Kaji's theorem were obtained in this century (\cite{bolognesi-pirola, kaji3}).  

In positive characteristic, there exist many examples of tangentially degenerate curves. 
In most of such known cases, the Gauss map $\gamma: C \dashrightarrow \mathbb{G}(1, \mathbb{P}^N)$, which sends a smooth point $P \in C$ to the tangent line $T_PC \in \mathbb{G}(1, \mathbb{P}^N)$, is {\it not} separable. 
Very surprisingly, in 1994, Esteves and Homma \cite{esteves-homma} presented an embedding 
$$ \varphi: \mathbb{P}^1 \rightarrow \mathbb{P}^3; \ (1:t) \mapsto (1:t:t^2-t^p:t^3+2t^p-3t^{p+1}) $$
of $\mathbb{P}^1$ such that $\varphi(\mathbb{P}^1)$ is tangentially degenerate and the Gauss map of $\varphi(\mathbb{P}^1)$ is birational onto its image. 
This is only known example of a tangentially degenerate curve with a separable Gauss map.  
The existence of another example has been unknown for a long time.  
One reason is that the separability of the Gauss map is equivalent to the reflexivity with respect to projective dual (\cite{hefez-kleiman, hefez-voloch, kaji2, voloch}), and that several pathological phenomena in positive characteristic do not occur under the assumption that the reflexivity holds (\cite{hefez-kleiman}). 

This paper proves the following:  

\begin{theorem} \label{rational}
Let $p>2$, let $q$ be a power of $p$, and let $\mathbb{F}_{q^n} \subset k$ be a finite field of $q^n$ elements containing the set $\{\alpha \in k \ | \ \alpha^{q-2}=1\}$. 
We consider the morphism
$$ \varphi: \mathbb{P}^1 \rightarrow \mathbb{P}^4; \ (1:t) \mapsto (1:t:t^2-t^q:t^{q^n}-t^{q^{2n}}:t(t^{q^n}-t^{q^{2n}})). $$
Then the following hold. 
\begin{itemize}
\item[(a)] $\varphi$ is an embedding. 
\item[(b)] For any point $P \in \mathbb{P}^1 \setminus \{(0:1)\}$, the set $(T_{\varphi(P)}\varphi(\mathbb{P}^1) \setminus \{\varphi(P)\}) \cap \varphi(\mathbb{P}^1)$ consists of exactly $q-2$ points. 
\item[(c)] The Gauss map of $\varphi(\mathbb{P}^1)$ is birational onto its image. 
\end{itemize}
\end{theorem}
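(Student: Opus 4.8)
The plan is to work in the affine chart $x_0\neq 0$ with parameter $t$, writing $\varphi(1:t)=(1,\,t,\,t^2-t^q,\,g(t),\,t\,g(t))$ where $g(t)=t^{q^n}-t^{q^{2n}}$. The first step is to record the derivative of this parametrization. Since $q$, $q^n$, $q^{2n}$ are all powers of $p$, the Frobenius kills the corresponding monomial derivatives, and a direct computation gives the velocity vector $v(t)=(0,\,1,\,2t,\,0,\,g(t))$, because $\frac{d}{dt}(t^2-t^q)=2t$, $g'(t)=0$, and $\frac{d}{dt}(t\,g(t))=g(t)$. For (a), injectivity on the affine part is immediate from the first two coordinates, while the point at infinity maps to $(0:0:0:0:1)$, distinct from every affine image. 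For the immersion property, $v(t)$ has second coordinate $1$ and so never vanishes on the affine part; at infinity I would use the local coordinate $s=1/t$ and observe that $x_3/x_4=1/t=s$ in the chart $x_4\neq 0$, so the pullback of $x_3/x_4$ is a uniformizer there. Thus $\varphi$ separates points and tangent vectors, and being a morphism from a projective curve it is a closed embedding.

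For (b), fix $P=(1:t_0)$ and realize $T_{\varphi(P)}\varphi(\mathbb{P}^1)$ as the projective line spanned by $\varphi(1:t_0)$ and $v(t_0)$. A point $\varphi(1:t)$ lies on this line exactly when $(1,\,t,\,t^2-t^q,\,g(t),\,t\,g(t))=a\,\varphi(1:t_0)+b\,v(t_0)$. Comparing the first two coordinates forces $a=1$ and $b=t-t_0$; the third coordinate then yields $(t-t_0)^2=(t-t_0)^q$ (using $t^q-t_0^q=(t-t_0)^q$), the fourth yields $g(t)=g(t_0)$, that is $(t-t_0)^{q^n}=(t-t_0)^{q^{2n}}$, and the fifth is then automatically satisfied. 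Writing $u=t-t_0$, the nontrivial solutions ($u\neq 0$) are precisely those $u$ with $u^{q-2}=1$ and $u^{q^{2n}-q^n}=1$.

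The crux of the argument, and the point at which the field hypothesis enters, is to show the second condition is redundant. If $u^{q-2}=1$ then $u$ lies in $\{\alpha\in k:\alpha^{q-2}=1\}\subseteq\mathbb{F}_{q^n}$, so $u^{q^n}=u$ and hence $u^{q^{2n}}=u$, which gives $u^{q^{2n}-q^n}=1$ for free. Since $p>2$ the polynomial $X^{q-2}-1$ is separable with exactly $q-2$ roots, each producing a distinct affine point $\varphi(1:t_0+u)$ on the curve; a short check that $(0:0:0:0:1)$ is not a combination $a\,\varphi(1:t_0)+b\,v(t_0)$ shows the point at infinity never lies on $T_{\varphi(P)}\varphi(\mathbb{P}^1)$. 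Therefore $(T_{\varphi(P)}\varphi(\mathbb{P}^1)\setminus\{\varphi(P)\})\cap\varphi(\mathbb{P}^1)$ has exactly $q-2$ elements. I expect this reduction, namely verifying that the containment hypothesis on $\mathbb{F}_{q^n}$ exactly cancels the extra condition coming from the last two coordinates, to be the main obstacle, together with the bookkeeping that rules out spurious intersection points.

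Finally, for (c) I would exhibit a rational inverse to the Gauss map $\gamma$. From the $2$-plane $W(t_0)\subset k^5$ underlying $T_{\varphi(P)}\varphi(\mathbb{P}^1)$, the vectors with vanishing first coordinate form the line spanned by $v(t_0)=(0,\,1,\,2t_0,\,0,\,g(t_0))$; normalizing the second coordinate to $1$, the third coordinate reads off $2t_0$, and since $p>2$ this recovers $t_0$ as a rational function of the Plücker coordinates of $W(t_0)$. This defines a rational map from $\gamma(\mathbb{P}^1)\subseteq\mathbb{G}(1,\mathbb{P}^4)$ back to $\mathbb{P}^1$ whose composition with $\gamma$ is the identity, so $\gamma^*$ is an isomorphism of function fields and $\gamma$ is birational onto its image.
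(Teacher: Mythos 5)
Your proof is correct and follows essentially the same route as the paper: the same derivative computation, the same use of the hypothesis $\{\alpha \mid \alpha^{q-2}=1\}\subseteq\mathbb{F}_{q^n}$ to dispose of the conditions coming from the last two coordinates, and the same recovery of $t$ from the entry $2t$ of the row-reduced tangent frame (using $p>2$). The only organizational difference is in (b): the paper exhibits the $q-2$ points via the translations $t\mapsto t+\alpha$ (its non-classical automorphisms) and separately bounds the count from above using only the first three coordinates, whereas you solve the full system of five coordinate equations in one pass.
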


Theorem \ref{rational} answers the following question raised by Kaji \cite{kaji4} in the affirmative. 
 
\begin{question} \label{kaji's question} 
Does there exist a tangentially degenerate space curve $C \subset \mathbb{P}^N$ with a birational Gauss map such that a general tangent line $T_PC$ of $C$ contains two points of $C$ other than $P$?
\end{question}

In the proof of Theorem \ref{rational}, it is important to show that $q-2$ automorphisms of $\mathbb{P}^1$ are {\it non-classical}, in the sense of Levcovitz \cite[p.136]{levcovitz}. 
In this paper, this term is used only in an extremal case. 

\begin{definition}[Levcovitz \cite{levcovitz}]
Let $X$ be a smooth projective curve, and let $\varphi: X \rightarrow \mathbb{P}^N$ with $N \ge 2$ be a morphism, which is birational onto its image.  
An automorphism $\sigma \in {\rm Aut}(X) \setminus \{1\}$ is said to be non-classical with respect to $\varphi$ 
if 
$$ \varphi \circ \sigma (P) \in \left\langle \varphi(P), \ \frac{d \varphi}{d x}(P) \right\rangle $$
for a general point $P$ of $X$, where $x$ is a local parameter at some point of $X$.   
\end{definition}

\begin{remark}
If $\sigma \in {\rm Aut}(X) \setminus \{1\}$ is non-classical with respect to a birational morphism $\varphi: X \rightarrow \mathbb{P}^N$ with $N \ge 3$ onto its image, then $\varphi(X)$ is tangentially degenerate. 
\end{remark} 

This paper presents a method of a construction of tangentially degenerate curves with a birational Gauss map, focusing on the non-classicality of automorphisms. 
In particular, this paper proves the following:  

\begin{theorem} \label{main} 
Let $p>2$ and $N \ge 3$. 
If a smooth projective curve $X$ admits a local parameter $x \in k(X)$ at some point $P \in X$, a function $y \in k(X)$, and an automorphism $\sigma \in {\rm Aut}(X)$ such that  
\begin{itemize}
\item[(a)] $y \not \in \langle 1, x \rangle$, 
\item[(b)] $k(X)=k(x, y)$,  
\item[(c)] $\sigma^*x=x+\alpha$ for some $\alpha \in k \setminus \{0\}$, and 
\item[(d)] $\sigma^*y-y=\alpha\frac{dy}{dx}$,
\end{itemize}
then there exists a morphism $\varphi: X \rightarrow \mathbb{P}^N$, which is birational onto its image, such that 
\begin{itemize}
\item[(1)] $\varphi(X)$ is not contained in any hyperplane of $\mathbb{P}^N$, 
\item[(2)] the automorphism $\sigma$ is non-classical with respect to $\varphi$, and
\item[(3)] the Gauss map of $\varphi(X)$ is birational onto its image.  
\end{itemize}
\end{theorem}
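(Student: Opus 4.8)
The plan is to realize the non-classicality as a single linear condition and then assemble the coordinates of $\varphi$ inside the kernel of one operator. Set
$$ S := \left\{ f \in k(X) \ \middle| \ \sigma^* f - f = \alpha\,\frac{df}{dx} \right\}, $$
a $k$-linear subspace of $k(X)$. By (c), (d) and $\frac{d1}{dx}=0$, $\frac{dx}{dx}=1$, one sees at once that $1,x,y \in S$. The key point is that if all coordinates $f_i$ of $\varphi=(f_0:\cdots:f_N)$ lie in $S$, then $(\sigma^* f_i)_i=(f_i)_i+\alpha(f_i')_i$, so $\varphi\circ\sigma(P)\in\langle\varphi(P),\frac{d\varphi}{dx}(P)\rangle$ at a general $P$; hence (2) holds automatically, with $\sigma\neq 1$ since $\alpha\neq 0$. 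The problem thus reduces to selecting an $(N+1)$-dimensional $V\subseteq S$ whose induced morphism is non-degenerate, birational, and has birational Gauss map.

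Next I would secure enough room inside $S$ and, crucially, a genuine second-order term. Expanding $(x+\alpha)^2$ and $(x+\alpha)^p$ gives
$$ g_0 := x^p-\alpha^{p-2}x^2\in S,\qquad \frac{d^2 g_0}{dx^2}=-2\alpha^{p-2}\neq 0, $$
where $p>2$ is used decisively. Also $w:=x^p-\alpha^{p-1}x$ is $\sigma$-invariant, so $v:=w^p$ is $\sigma$-invariant and inseparable ($\frac{dv}{dx}=0$); since $\sigma^*(fg)-fg-\alpha\frac{d(fg)}{dx}=\alpha^2\frac{df}{dx}\frac{dg}{dx}$, one gets $v^j f\in S$ whenever $f\in S$. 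In particular the powers $v^j$ are $k$-independent elements of $S$, so $\dim_k S=\infty$ and any finite independent subset extends to dimension $N+1$.

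Now choose $c\in k$ with $\frac{d^2}{dx^2}(y+c\,g_0)\neq 0$ (only one $c$ can fail, as $g_0''\neq 0$) and set $h:=y+c\,g_0\in S$. Then $h''\neq 0$ and, since $g_0\in k(x)$, $k(x,h)=k(x,y)=k(X)$ by (b); also $h\notin\langle 1,x\rangle$. Take $f_0=1,f_1=x,f_2=h$, extend $\{1,x,h\}$ to a basis $f_0,\dots,f_N$ of a subspace of $S$, and let $\varphi$ be the resulting morphism (automatic on the smooth curve $X$). Then (1) is the $k$-independence of the $f_i$, and $\varphi$ is birational onto its image because $x=f_1/f_0$ and $h=f_2/f_0$ already generate $k(X)$. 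For (3), with $\Phi=(f_0,\dots,f_N)$ the $3\times 3$ minor of $\Phi,\frac{d\Phi}{dx},\frac{d^2\Phi}{dx^2}$ on the columns $1,x,h$ is $h''\neq 0$, so $\Phi\wedge\frac{d\Phi}{dx}\wedge\frac{d^2\Phi}{dx^2}\not\equiv 0$ and the Gauss map $\gamma$ is separable. The plane model $\bar\varphi=(1:x:h)$ is birational onto its image (again $k(x,h)=k(X)$) and has separable Gauss by the same minor, hence is reflexive, so its Gauss map is birational; as the projection to the first three coordinates carries $T_{\varphi(P)}\varphi(X)$ to $T_{\bar\varphi(P)}\bar\varphi(X)$ (the tangent direction $(0,1,h',\dots)$ is not collapsed), the Gauss map of $\bar\varphi(X)$ factors through $\gamma$, forcing $\gamma$ generically injective. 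Thus $\gamma$ is birational onto its image.

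The main obstacle is precisely this final step. Separability is the delicate part in characteristic $p$, because the given $y$ may satisfy $\frac{d^2y}{dx^2}=0$ (I expect to exhibit such $y$, e.g. $y=x^{p^2+1}-\alpha^{p^2-p}x^{p+1}$ on $\mathbb{P}^1$), so $y$ alone need not yield a separable Gauss map; the corrective term $c\,g_0$ restores the second-order datum while, because $g_0\in k(x)$, leaving $k(x,y)$ unchanged, which is exactly what keeps the plane model birational and hence transfers generic injectivity upward. Balancing these two requirements—second-order non-degeneracy and field generation—inside the single space $S$, together with the passage from reflexivity of the plane model to injectivity of $\gamma$, is where the real work lies; the rest is linear algebra and the standard criterion that $\gamma$ is separable iff $\Phi\wedge\frac{d\Phi}{dx}\wedge\frac{d^2\Phi}{dx^2}\not\equiv 0$.
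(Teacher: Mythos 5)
Your construction is correct, and for the set-up and for conclusions (1) and (2) it coincides with the paper's: your space $S$ is exactly the paper's $V_{\sigma,x}$, your $g_0=x^p-\alpha^{p-2}x^2$ is (up to a scalar) the paper's $x^2+\beta x^{p^n}$ with $n=1$ and $\beta\alpha^{p^n}+\alpha^2=0$, and your powers $v^j$ of a $p$-th power of a $\sigma$-invariant are precisely the subspace $(k(X)^\sigma)^p\subset V_{\sigma,x}$ that the paper uses to get enough linearly independent coordinates. The genuine divergence is in part (3). The paper keeps $y$ itself as a coordinate, adds $g_i=x^2+\beta x^{p^n}$ as a \emph{separate} coordinate, and simply reads off from the row-reduced tangent matrix that the affine Grassmannian coordinates $\frac{dg_i}{dx}=2x$, $\frac{dy}{dx}$ and $y-x\frac{dy}{dx}$ all lie in $k(\gamma\circ\varphi(X))$; since $p>2$ this gives $x\in k(\gamma\circ\varphi(X))$, hence $y\in k(\gamma\circ\varphi(X))$, hence $k(\gamma\circ\varphi(X))=k(x,y)=k(X)$ and birationality of $\gamma$ in two lines, with no appeal to duality theory. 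You instead fold the corrector into $y$ (forming $h=y+cg_0$ with $h''\ne 0$), deduce separability of $\gamma$ from $\Phi\wedge\frac{d\Phi}{dx}\wedge\frac{d^2\Phi}{dx^2}\ne 0$, and get generic injectivity by projecting to the plane model $(1:x:h)$ and invoking the Monge--Segre--Wallace criterion together with biduality for reflexive plane curves \cite{hefez-kleiman}. Both routes are valid and use $p>2$ at the same spot ($2\ne 0$); yours leans on heavier external machinery but is more robust, in that it does not depend on the lucky appearance of the specific function $2x$ among the Grassmannian coordinates, whereas the paper's argument is elementary and self-contained. Two small points you should make explicit: $h\notin\langle 1,x\rangle$ is automatic from $h''\ne 0$ (elements of $\langle 1,x\rangle$ have vanishing second derivative), and the transfer of injectivity from the plane model to $\gamma$ requires that a general tangent line of $\varphi(X)$ miss the centre of projection, which the first two rows $(1,x)$ and $(0,1)$ of your tangent matrix already guarantee.
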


In the case where the order of $\sigma$ is $p$, assumptions can be simplified. 

\begin{theorem} \label{main2} 
Let $p>2$ and $N \ge 3$. 
If a smooth projective curve $X$ admits a local parameter $x \in k(X)$ at some point $P \in X$ and an automorphism $\sigma \in {\rm Aut}(X)$ such that  
\begin{itemize}
\item[(a)] $\sigma^*x-x \in k \setminus \{0\}$, and  
\item[(b)] the order of $\sigma$ is $p$,
\end{itemize}
then the same assertion as in Theorem \ref{main} holds. 
\end{theorem}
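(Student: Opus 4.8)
The plan is to deduce Theorem \ref{main2} from Theorem \ref{main} by manufacturing a function $y$ out of the given data $x$ and $\sigma$. Write $\alpha := \sigma^*x-x \in k\setminus\{0\}$, so condition (c) of Theorem \ref{main} holds verbatim; the whole task is to produce $y\in k(X)$ satisfying (a), (b), (d). Throughout set $D:=d/dx$. Since $x$ is a local parameter at $P$, the covering $x\colon X\to\mathbb{P}^1$ has a point of ramification index one and hence is separable (an inseparable cover has every ramification index divisible by its inseparable degree). Thus $D$ extends uniquely to a derivation of $k(X)$, and by uniqueness it commutes with $\sigma^*$: both $\sigma^{*-1}D\sigma^*$ and $D$ restrict to $d/dx$ on $k(x)$, so they agree on $k(X)$.

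First I would record the ``model solution''. Using $(x+\alpha)^p=x^p+\alpha^p$ one checks that
\[
y_1 := x^2-\alpha^{2-p}x^p
\]
satisfies $\sigma^*y_1-y_1=2\alpha x=\alpha\,Dy_1$, i.e.\ condition (d), and visibly $y_1\notin\langle 1,x\rangle$. This already settles the case $k(X)=k(x)$: take $y=y_1$. (When $\alpha^{p-2}=1$ this is exactly the Esteves--Homma shape $x^2-x^p$; the factor $\alpha^{2-p}$ is the modification making it work for an arbitrary nonzero $\alpha$, paralleling the role of $q$ in Theorem \ref{rational}.)

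The main obstacle is condition (b): when $m:=[k(X):k(x)]>1$ the model solution lies in $k(x)$ and fails to generate. The key point is that the solutions of (d) form a $k$-vector space, and that for any $a\in k(X)$ with $Da=0$ and $\sigma^*a=a$ the function $ax$ again solves (d), because $\sigma^*(ax)-ax=a\alpha=\alpha\,D(ax)$. I would use this to add a generating correction. Let $L_0:=k(X)^{\langle\sigma\rangle}$ and $u:=x^p-\alpha^{p-1}x$; then $\sigma^*u=u$, so $k(u)\subseteq L_0$, while $x\notin L_0$ forces $k(X)=L_0(x)$ since $[k(X):L_0]=p$ is prime (here the order of $\sigma$ enters). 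As $k(x)/k(u)$ is Artin--Schreier and $k(X)/k(x)$ is separable, the subextension $L_0/k(u)$ of $k(X)/k(u)$ is finite separable; choose a primitive element $\theta$ with $k(u,\theta)=L_0$. The decisive step is that $a:=\theta^p$ still generates: $\theta$ is a root of $T^p-\theta^p$ over $k(u)(\theta^p)$, so it is purely inseparable over that field, but it is separable over $k(u)$, whence $\theta\in k(u)(\theta^p)$ and $k(u,\theta^p)=L_0$. Moreover $Da=p\theta^{p-1}D\theta=0$ and $\sigma^*a=(\sigma^*\theta)^p=a$.

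Finally I would set
\[
y := x^2-\alpha^{2-p}x^p+\theta^p x .
\]
Then (d) holds (a sum of two solutions), and $y\notin\langle 1,x\rangle$ because of the $x^2$-term. For (b): subtracting the $k(x)$-part gives $\theta^p x\in k(x,y)$, hence $\theta^p\in k(x,y)$, so $k(x,y)\supseteq k(u,\theta^p)=L_0$ and then $k(x,y)\supseteq L_0(x)=k(X)$, giving $k(X)=k(x,y)$. With (a)--(d) verified, Theorem \ref{main} yields the desired $\varphi$. I expect the only delicate points to be the separability reductions—ensuring $D$ is well defined and $\sigma$-equivariant, and that $\theta^p$ inherits the generating property—while the verification of (d) is a one-line binomial computation.
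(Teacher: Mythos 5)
Your proposal is correct, but it takes a genuinely different route from the paper. The paper's own proof is a two-line reduction: by Proposition \ref{functions}(b) and (c), since the order of $\sigma$ is the prime $p$, one can choose a generator $y$ of $k(X)/k(x)$ lying in $(k(X)^{\sigma})^{p}\setminus k$; for such a $y$ both sides of condition (d) vanish ($\sigma^*y-y=0$ and $dy/dx=0$), so (a)--(d) of Theorem \ref{main} hold at once. You instead build $y=x^{2}-\alpha^{2-p}x^{p}+\theta^{p}x$ by hand, combining the Esteves--Homma-type function $x^{2}-\alpha^{2-p}x^{p}$ (which is exactly Proposition \ref{functions}(d) with $\beta=-\alpha^{2-p}$) with a correction $\theta^{p}x$, where $\theta$ is a primitive element of $k(X)^{\sigma}$ over $k(u)$, $u=x^{p}-\alpha^{p-1}x$. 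Both arguments rest on the same two pillars --- that $[k(X):k(X)^{\sigma}]=p$ prime forces $k(X)=k(X)^{\sigma}(x)$, and that $p$-th powers of $\sigma$-invariants are simultaneously $\sigma$-fixed and killed by $d/dx$ --- but your construction is more explicit and self-contained (it avoids the citation of \cite[Proposition 3.10.2]{stichtenoth} hidden inside Proposition \ref{functions}(c)), at the cost of a case split and a longer verification; it also produces a $y$ satisfying (d) nontrivially rather than as $0=0$. One small repair: your justification that $y\notin\langle 1,x\rangle$ ``because of the $x^{2}$-term'' is not quite the right reason when $[k(X):k(x)]>1$, since a priori $\theta^{p}x$ could interact with the polynomial part; the clean argument is that $y\in\langle 1,x\rangle\subset k(x)$ would give $\theta^{p}\in k(x)$, hence $k(X)^{\sigma}=k(u,\theta^{p})\subseteq k(x)$ and $k(X)=k(X)^{\sigma}(x)=k(x)$, contradicting $[k(X):k(x)]>1$. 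With that adjustment the proof is complete.
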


\begin{remark}
If we take $k(X)=k(t)$, $\alpha=1$, $x=t$ and $y=t^2-t^p$ (and $g_3=t^3+2t^p-3t^{p+1}$ in the proof of Theorem \ref{main}), then we can recover Esteves--Homma's embedding. 
\end{remark}

\begin{remark}
Conditions (a) and (b) in Theorem \ref{main2} are satisfied for {\it many} examples of Artin--Schreier curves. 
A curve $X$ with a function field $k(x, y)$ given by an irreducible polynomial $x^q-x-g(y) \in k[x, y]$ such that $g(y) \in k[y]$ and $g_y \ne 0$ is such an example.  
\end{remark} 

\section{Proofs of main theorems} 

This paper introduces a vector subspace $V_{\sigma, x} \subset k(X)$ and investigates it, inspired by Esteves--Homma's functions $t^2-t^p$ and $t^3+2t^p-3t^{p+1}$. 
Let $x$ be a local parameter of a smooth projective curve $X$ at a point $P \in X$, let $\sigma \in {\rm Aut}(X) \setminus \{1\}$, and let $f:=\sigma^*x-x \in k(X)$. 
We define the set  
$$ V_{\sigma, x}:=\left\{g \in k(X) \ | \ \sigma^*g-g=f\frac{dg}{dx}\right\}. $$

\begin{proposition} \label{functions} 
Assume that $p>0$ and $f \ne 0$.  
Then the following hold. 
\begin{itemize}
\item[(a)] The set $V_{\sigma, x}$ is a vector subspace of $k(X)$ over $k$ with $x \in V_{\sigma, x}$. 
\item[(b)] The field $(k(X)^{\sigma})^p$ is a subspace of $V_{\sigma, x}$. 
Furthermore, if the order of $\sigma$ is finite, then the dimensions of $(k(X)^{\sigma})^p$ and $V_{\sigma, x}$ are infinite over $k$. 
\item[(c)] If the order of $\sigma$ is prime, then $k(X)=k(x, y)$ for some $y \in ((k(X)^\sigma)^p) \setminus k$. 
In this case, $y \not \in \langle 1, x \rangle$. 
\item[(d)] Let $n \ge 1$ be an integer, let $\alpha \in k \setminus \{0\}$, and let $\beta \in k$ with $\beta\alpha^{p^n}+\alpha^2=0$. 
If $f=\alpha$, that is, $\sigma^*x=x+\alpha$, then $x^2+\beta x^{p^n} \in V_{\sigma, x}$.  
\item[(e)] If $f \in k \setminus \{0\}$, then the order of $\sigma$ is finite, and it is divisible by $p$. 
\end{itemize} 
\end{proposition}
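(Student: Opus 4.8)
The plan is to read $V_{\sigma,x}$ as the kernel of the operator $L := \sigma^{*}-\mathrm{id}-f\,\frac{d}{dx}$ on $k(X)$. Since $\sigma^{*}$ is a $k$-algebra automorphism, $\frac{d}{dx}$ is a $k$-derivation, and multiplication by $f$ is $k$-linear, the map $L$ is $k$-linear; hence $V_{\sigma,x}=\ker L$ is a $k$-subspace, and $x\in V_{\sigma,x}$ follows from $\sigma^{*}x-x=f=f\,\frac{dx}{dx}$, giving (a). For the first assertion of (b), if $h\in k(X)^{\sigma}$ then $\sigma^{*}(h^{p})=h^{p}$ and $\frac{d}{dx}(h^{p})=p\,h^{p-1}\frac{dh}{dx}=0$, so $h^{p}\in V_{\sigma,x}$; moreover $(k(X)^{\sigma})^{p}$ is a subfield containing $k=k^{p}$, hence a $k$-subspace. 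Part (d) is the direct Frobenius computation: with $f=\alpha$ one has $\frac{d}{dx}(x^{2}+\beta x^{p^{n}})=2x$, while $\sigma^{*}(x^{2}+\beta x^{p^{n}})-(x^{2}+\beta x^{p^{n}})=2\alpha x+(\alpha^{2}+\beta\alpha^{p^{n}})=2\alpha x$ by the hypothesis $\beta\alpha^{p^{n}}+\alpha^{2}=0$, which is exactly $\alpha\cdot 2x$.

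For the dimension statement in (b), I would use that a finite-order $\sigma$ makes $k(X)/k(X)^{\sigma}$ a finite Galois extension, so $k(X)^{\sigma}$ has transcendence degree $1$ over $k$; choosing a transcendental $u\in k(X)^{\sigma}$, the elements $u^{p},u^{2p},\dots$ are $k$-linearly independent and lie in $(k(X)^{\sigma})^{p}$, whence both $(k(X)^{\sigma})^{p}$ and the larger space $V_{\sigma,x}$ are infinite-dimensional.

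The core of the proposition is (c). Writing $K:=k(X)$ and $\ell$ for the prime order of $\sigma$, first note $x\notin K^{\sigma}$ (as $\sigma^{*}x-x=f\neq 0$), so $K=K^{\sigma}(x)$ because $[K:K^{\sigma}]=\ell$ is prime. The key is to upgrade this to $K=(K^{\sigma})^{p}(x)$. I would count degrees: from the tower $(K^{\sigma})^{p}\subseteq K^{\sigma}\subseteq K$ and the fact that a one-variable function field over the perfect field $k$ satisfies $[K^{\sigma}:(K^{\sigma})^{p}]=p$, one gets $[K:(K^{\sigma})^{p}]=\ell p$. On the other hand, the Frobenius isomorphism $K\cong K^{p}$ shows $[K^{p}:(K^{\sigma})^{p}]=\ell$ and $(K^{\sigma})^{p}(x^{p})=K^{p}$, while $x\notin K^{p}$ (its order at $P$ is $1$, not divisible by $p$) gives $[K^{p}(x):K^{p}]=p$; multiplying yields $[(K^{\sigma})^{p}(x):(K^{\sigma})^{p}]=\ell p$, so $(K^{\sigma})^{p}(x)=K$. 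Since $K/k(x)$ is finite separable and $k$ is infinite, the primitive element theorem lets me choose a single $y\in(K^{\sigma})^{p}$ (a $k$-combination of finitely many generators, still in the $k$-subspace $(K^{\sigma})^{p}$) with $K=k(x,y)$, and arrange $y\notin k$. Finally, if $y=a+bx$ with $a,b\in k$, then $\sigma^{*}y-y=bf$, which is nonzero unless $b=0$; since $y\in K^{\sigma}\setminus k$ this forces $b=0$ and hence $y\notin\langle 1,x\rangle$. I expect this generation step, the inseparable degree bookkeeping proving $(K^{\sigma})^{p}(x)=K$, to be the main obstacle.

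For (e), with $f=\alpha\in k\setminus\{0\}$ one computes $(\sigma^{*})^{m}x=x+m\alpha$. The function $x$ defines a finite morphism $\pi:X\to\mathbb{P}^{1}$ with $\pi\circ\sigma=T_{\alpha}\circ\pi$, where $T_{\alpha}$ is translation by $\alpha$, of order exactly $p$; hence $\pi\circ\sigma^{p}=\pi$, i.e.\ $\sigma^{p}$ lies in the group of automorphisms of $X$ fixing $x$, which is identified with $\mathrm{Aut}(k(X)/k(x))$ and is finite of order at most $[k(X):k(x)]$. Thus $\sigma^{p}$, and therefore $\sigma$, has finite order $m$; and $\sigma^{m}=\mathrm{id}$ gives $m\alpha=0$, so $p\mid m$. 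This descent to $\mathbb{P}^{1}$ is what supplies finiteness even when the genus is $0$ or $1$, where $\mathrm{Aut}(X)$ itself may be infinite.
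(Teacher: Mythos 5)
Your proof is correct, and most parts track the paper's argument: (a) and (d) are the same computations, (e) is the paper's argument in geometric dress (the paper notes $x\in k(X)^{\sigma^p}$, so $[k(X):k(X)^{\sigma^p}]$ is finite and Artin's theorem forces $\sigma^p$ to have finite order; your observation that $\sigma^p\in\mathrm{Aut}(k(X)/k(x))$, a group of order at most $[k(X):k(x)]$, is the same mechanism), and in (b) you exhibit explicit independent elements $u^{p},u^{2p},\dots$ where the paper deduces infinite dimension from finiteness of $[k(X):(k(X)^{\sigma})^p]$. The genuine divergence is in (c). The paper only establishes the inclusion $k(X)^p\subset (k(X)^{\sigma})^p(x)$ and then cites Stichtenoth's Proposition 3.10.2 to get $z\in k(X)^p$ with $k(X)=k(x,z)$, whose finitely many ``coefficients'' $y_1,\dots,y_n\in (k(X)^{\sigma})^p$ generate $k(X)$ over $k(x)$. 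You instead prove the stronger equality $k(X)=(k(X)^{\sigma})^p(x)$ directly, by matching $[k(X):(k(X)^{\sigma})^p]=\ell p$ against $[(k(X)^{\sigma})^p(x):(k(X)^{\sigma})^p]=[k(X)^p(x):k(X)^p]\cdot[k(X)^p:(k(X)^{\sigma})^p]=p\ell$, using $[F:F^p]=p$ for one-variable function fields over a perfect field together with $x\notin k(X)^p$ (since $v_P(x)=1$). This degree count is self-contained where the paper outsources the inseparability bookkeeping to a reference, and it yields a cleaner statement; both routes then finish identically with the primitive element theorem for the separable extension $k(X)/k(x)$ and the observation that $\sigma$-invariance of $y$ rules out $y\in\langle 1,x\rangle\setminus k$. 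The one step you gloss over is the degenerate case $y\in k$ (i.e.\ $k(X)=k(x)$): as the paper does, you should say explicitly that the infinite dimensionality of $(k(X)^{\sigma})^p$ from (b) supplies a replacement $y\in (k(X)^{\sigma})^p\setminus k$ with $k(x,y)=k(x)$.
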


\begin{proof} 
Let $g, h \in V_{\sigma, x}$ and $a, b \in k$. 
Then 
\begin{eqnarray*}
\sigma^*(a g+b h)-(a g+b h) &=& a \sigma^* g+ b \sigma^* h-(a g+ b h)=a (\sigma^*g-g)+b (\sigma^* h-h) \\
&=& a f\frac{dg}{dx}+b f\frac{dh}{dx}=f\frac{d}{d x}(a g+b h). 
\end{eqnarray*} 
On the other hand, 
$$ \sigma^*x-x=f=f\frac{d x}{d x}. $$
Assertion (a) follows. 

For each $g \in (k(X)^\sigma)^p$, 
$$ \sigma^*g-g=0, \ \frac{dg}{dx}=0. $$
This implies that $(k(X)^\sigma)^p \subset V_{\sigma, x}$.  
Assume that the order of $\sigma$ is finite. 
Then $[k(X): k(X)^\sigma]$ is finite.  
Since $[k(X)^{\sigma}: (k(X)^{\sigma})^p]$ is finite, it follows that $[k(X):(k(X)^{\sigma})^p]$ is finite, and hence, the dimension of $(k(X)^\sigma)^p$ is infinite. 
Assertion (b) follows. 

Assume that the order of $\sigma$ is prime. 
Since $[k(X):k(X)^{\sigma}]$ is prime and $x \not\in k(X)^{\sigma}$, it follows that $k(X)=(k(X)^{\sigma})(x)$. 
It is inferred that $k(X)^p \subset (k(X)^\sigma)^p(x)$. 
Since $k(X)/k(x)$ is finite and separable, there exists $z \in k(X)^p$ such that $k(X)=k(x, z)$ (see, for example, \cite[Proposition 3.10.2]{stichtenoth}). 
Therefore, there exist $y_1, \ldots, y_n \in (k(X)^{\sigma})^p$ such that $k(X)=k(x)(y_1, \ldots, y_n)$. 
Since $k(X)/k(x)$ is separable, there exists $y \in (k(X)^{\sigma})^p$ such that $k(X)=k(x, y)$. 
If $y \in k$, then $k(X)=k(x)$, and we can take another $y \in (k(X)^\sigma)^p \setminus k$ with $k(x)=k(x, y)$. 
Since $(k(X)^\sigma)^p \cap \langle 1, x \rangle=k$, it follows that $y \not\in \langle 1, x \rangle$. 
Assertion (c) follows. 

Let $g=x^2+\beta x^{p^n}$. 
Then 
\begin{eqnarray*}
\sigma^*g-g &=& (x+\alpha)^2+\beta (x+\alpha)^{p^n}-(x^2+\beta x^{p^n}) \\
&=& 2\alpha x+(\alpha^2+\beta \alpha^{p^n})=2 \alpha x = f \frac{d g}{d x},  
\end{eqnarray*}
and hence, $g \in V_{\sigma, x}$. 
Assertion (d) follows. 

Let $f=\alpha \in k$. 
Then $(\sigma^p)^*x=x+p \alpha=x$, and hence, $x \in k(X)^{\sigma^p}$. 
Since $x \in k(X)$ is transcendental over $k$, it follows that $[k(X):k(X)^{\sigma^p}]$ is finite. 
This implies that $(\sigma^p)^m=1$ for some positive integer $m$. 
On the contrary, if $\sigma^m=1$, then $x=(\sigma^m)^*x=x+m \alpha$. 
This implies that $m$ is divisible by $p$. 
Assertion (e) follows. 
\end{proof}

\begin{remark}
For any automorphism $\sigma \in {\rm Aut}(X) \setminus \{1\}$ and any point $P \in X$, there exists a local parameter $x \in k(X)$ at $P$ such that $\sigma^*x-x \ne 0$. 
\end{remark}

We prove main theorems. 

\begin{proof}[Proof of Theorem \ref{rational}] 
Let $P_\infty=(0:1) \in \mathbb{P}^1$. 
By the expression, $\mathbb{P}^1 \setminus \{P_\infty\} \cong \varphi(\mathbb{P}^1 \setminus \{P_\infty\})$ as affine varieties.    
We consider a neighborhood of $P_{\infty}$. 
The morphism $\varphi$ is represented by 
\begin{eqnarray*}
\varphi(s:1) &=& (s^{q^{2n}+1}:s^{q^{2n}}: s^{q^{2n}-1}-s^{q^{2n}+1-q}:s^{q^{2n}+1-q^{n}}-s: s^{q^{2n}-q^{n}}-1) \\
&=& \left (\frac{s^{q^{2n}+1}}{s^{q^{2n}-q^{n}}-1}:\frac{s^{q^{2n}}}{s^{q^{2n}-q^{n}}-1}: \frac{s^{q^{2n}-1}-s^{q^{2n}+1-q}}{s^{q^{2n}-q^{n}}-1}:s: 1\right).  
\end{eqnarray*}
Therefore, $\varphi(P_\infty)=(0:0:0:0:1)$, and the order of the hyperplane defined by $X_3=0$ at $P_\infty$ is equal to $1$. 
This implies that $\varphi(P_\infty)$ is a smooth point. 
Assertion (a) follows. 

Let $\sigma_\alpha: \mathbb{P}^1 \rightarrow \mathbb{P}^1$ be an automorphism given by $t \mapsto t+\alpha$ with $\alpha^{q-2}=1$. 
Since $\alpha \in \mathbb{F}_{q^n}$, it follows that $\alpha^{q^n}-\alpha^{q^{2n}}=(\alpha-\alpha^{q^n})^{q^n}=0$. 
Then
\begin{eqnarray*}
\frac{d \varphi}{d t} &=& (0, \ 1, \ 2t, \ 0, \ t^{q^n}-t^{q^{2n}}), \\
\varphi \circ \sigma_\alpha &=& (1, \ t+\alpha, \ t^2-t^q+2\alpha t, \ t^{q^n}-t^{q^{2n}}, \ (t+\alpha)(t^{q^n}-t^{q^{2n}})). 
\end{eqnarray*} 
Let $P=(1:t) \in \mathbb{P}^1 \setminus \{P_\infty\}$.
It follows that 
$$ \varphi(\sigma_\alpha(P)) =\varphi (P)+\alpha \frac{d\varphi}{d t}(P) \in \left\langle \varphi(P), \ \frac{d\varphi}{d t}(P) \right\rangle$$
for any $\alpha \in k$ with $\alpha^{q-2}=1$.
Therefore, there exist $q-2$ points of $(T_{\varphi(P)}\varphi(\mathbb{P}^1)  \setminus \{\varphi(P)\}) \cap \varphi(\mathbb{P}^1)$.  
It is not difficult to check that $\varphi(P_{\infty}) \not\in T_{\varphi(P)}\varphi(\mathbb{P}^1)$. 
Assume that $\varphi(P') \in T_{\varphi(P)}\varphi(\mathbb{P}^1)$ for a point $P'=(1:u) \in \mathbb{P}^1 \setminus \{P_\infty\}$. 
Then, for some $\beta \in k$,  
$$ u= t+\beta \ \mbox{ and } \ u^2-u^q=t^2-t^q+2 \beta t. $$
These imply that $u=t+\beta$ and $\beta^2-\beta^q=0$.  
Assertion (b) follows.

The tangent line is spanned by the row vectors of the matrix  
$$ 
\left(\begin{array}{c} 
\varphi \\
\frac{d \varphi}{d t}
\end{array} 
\right) \sim 
\left(\begin{array}{ccccc}
1 & 0 & -t^2-t^q & t^{q^n}-t^{q^{2n}} & 0 \\
0 & 1 & 2t & 0 & t^{q^n}-t^{q^{2n}} 
\end{array} \right). 
$$
The function field $k(\gamma \circ \varphi(\mathbb{P}^1))$ of the image of the Gauss map $\gamma$ contains the function $\frac{d(t^2-t^q)}{dt}=2t$. 
Since $p>2$, it follows that $t \in k(\gamma \circ \varphi(\mathbb{P}^1))$. 
This implies that the Gauss map $\gamma$ is birational onto its image. 
\end{proof}

\begin{proof}[Proof of Theorem \ref{main}] 
Assume that condition (c) is satisfied for an automorphism $\sigma \in {\rm Aut}(X)$.  
Let $V_{\sigma, x}$ be the vector space as in Proposition \ref{functions} with $f=\alpha$, and let $g_3, \ldots, g_N \in V_{\sigma, x}$. 
We consider the rational map 
$$ \varphi: X \dashrightarrow \mathbb{P}^N; \ (1:x:y:g_3:\cdots:g_N). $$
By condition (b), $\varphi$ is birational onto its image. 
By conditions (a) and (d), $y \not \in \langle 1, x \rangle$ and $y \in V_{\sigma, x}$. 
According to Proposition \ref{functions} (b) and (e), we can take $g_3, \ldots, g_N \in V_{\sigma, x}$ such that $\dim \langle 1, x, y, g_3, \ldots, g_N \rangle=N+1$. 
This implies assertion (1). 
Then 
\begin{eqnarray*}
\frac{d \varphi}{d x}&=&\left(0, \ 1, \ \frac{d y}{d x}, \ \ldots, \frac{d g_i}{d x}, \ldots \right), \\
\varphi\circ\sigma&=&(1, \ x+\alpha, \ \sigma^*y, \ \ldots, \sigma^*g_i, \ldots).  
\end{eqnarray*}
Since $1, x, y, g_3, \ldots, g_N \in V_{\sigma, x}$, it follows that 
$$ \varphi(\sigma(P))=\varphi(P)+\alpha\frac{d \varphi}{d x}(P) \in \left\langle \varphi(P), \ \frac{d \varphi}{d x}(P) \right\rangle$$
for a general point $P$ of $X$. 
Assertion (2) follows. 
The tangent line is spanned by the row vectors of the matrix  
$$ 
\left(\begin{array}{ccccccc}
1 & 0 & y-x \frac{d y}{d x} & \cdots & g_i-x \frac{d g_i}{d x} & \cdots\\
0 & 1 & \frac{d y}{d x} & \cdots & \frac{d g_i}{d x} & \cdots
\end{array} \right). 
$$
If we take $g_i=x^2+ \beta x^{p^n}$ with $\beta\alpha^{p^n}+\alpha^2=0$ for some $i$, as in Proposition \ref{functions} (d), then the function field $k(\gamma \circ \varphi(X))$ of the image of the Gauss map $\gamma$ contains the function $d g_i/d x=2x$. 
Since $p>2$, it follows that $x \in k(\gamma \circ \varphi(X))$. 
In this case, since $dy/dx, y-x(dy/dx) \in k(\gamma\circ\varphi(X))$, it is inferred that $y \in k(\gamma\circ\varphi (X))$ and the Gauss map $\gamma$ is birational onto its image. 
Assertion (3) follows. 
\end{proof} 

\begin{proof}[Proof of Theorem \ref{main2}]
Assume that conditions (a) and (b) are satisfied for an automorphism $\sigma$. 
By Proposition \ref{functions} (b) and (c), there exists $y \in V_{\sigma, x} \setminus \langle 1, x \rangle$ such that $k(X)=k(x, y)$. 
Conditions (a), (b), (c) and (d) in Theorem \ref{main} are satisfied.   
\end{proof}

\begin{remark}
\begin{itemize}
\item[(a)] The automorphism $\sigma$ is non-classical with respect to the plane model given by conditions (a), (b), (c) and (d) in Theorem \ref{main}.
\item[(b)] The same assertion as Theorem \ref{main} without condition (3) holds, if $p=2$.
\item[(c)] Assume that $p > 0$ and $N \ge 3$. 
By Proposition \ref{functions} and the same method as in the proof of Theorem \ref{main}, it can be proved that for any automorphism $\sigma \in {\rm Aut}(X) \setminus \{1\}$ of prime order, there exists a birational embedding $\varphi: X \rightarrow \mathbb{P}^N$ with conditions (1) and (2) as in Theorem \ref{main}. 
\end{itemize}
\end{remark}

Finally, this paper raises the following: 

\begin{question}
Assume that a smooth projective curve $C \subset \mathbb{P}^N$ not in any plane is tangentially degenerate and the Gauss map of $C$ is birational onto its image. 
Then is it true that $C$ is rational? 
\end{question} 

\begin{center} {\bf Acknowledgements} \end{center} 
The author is grateful to Professor Seiji Nishioka for helpful conversations, by which the author was able to improve Proposition \ref{functions}. 
The author thanks Professor Hajime Kaji for telling Question \ref{kaji's question} and useful comments.

\end{document}